\DeclareFontFamily{U}{mathx}{}
\DeclareFontShape{U}{mathx}{m}{n}{<-> mathx10}{}
\DeclareSymbolFont{mathx}{U}{mathx}{m}{n}
\DeclareMathAccent{\widehat}{0}{mathx}{"70}
\DeclareMathAccent{\widecheck}{0}{mathx}{"71}
\def\@settitle{\begin{center}
		\baselineskip14\p@\relax
		\bfseries
		\LARGE
		\@title
	\end{center}
}
\newtheorem{theorem}{Theorem}[section]
\newtheorem{lemma}[theorem]{Lemma}
\newtheorem{corollary}[theorem]{Corollary}
\newtheorem{remark}[theorem]{Remark}
\newtheorem{definition}[theorem]{Definition}
\newtheorem{example}[theorem]{Example}
\theoremstyle{definition}
\renewcommand{\leq}{\leqslant}
\newcommand{\R}{\mathbb{R}}
\newcommand{\N}{\mathbb{N}}
\newcommand{\Z}{\mathbb{Z}}
\newcommand{\F}{\mathcal{F}}
\newcommand{\K}{\mathcal{K}}
\newcommand{\G}{\mathcal{G}}
\newcommand{\LL}{\mathcal{L}}
\newcommand{\const}{\mathrm{const}}
\newcommand{\CC}{\mathbb{C}}
\DeclareMathOperator{\eps}{\varepsilon}
\newenvironment{enumerate*}%
  {\begin{enumerate}%
    \setlength{\itemsep}{1pt}%
    \setlength{\parskip}{1pt}}%
  {\end{enumerate}}
\newcommand*{\@old@slash}{}\let\@old@slash\slash
\def\slash{\relax\ifmmode\delimiter"502F30E\mathopen{}\else\@old@slash\fi}
\title{Universal frame set for rational functions}
\author{Andrei V. Semenov}
\thanks{The work was carried out with the financial support of the Ministry of Science and Higher Education of the Russian Federation in the framework of a scientific project under agreement no. 075-15-2025-013. Author is winner of the “Leader” competition conducted by the Foundation for the Advancement of Theoretical Physics and Mathematics “BASIS” and would like to thank its sponsors and jury.}
\keywords{Frame set, Frames, Gabor analysis, complex analysis, time-frequency analysis}
\address{Andrei V. Semenov
Saint Petersburg State University\\
Department of Mathematics and Computer Science\\
Russia. }
\email{asemenov.spb.56@gmail.com}
\begin{document}
\maketitle

\begin{abstract}
Let $g \in L^2(\R)$ be a rational function of degree $M$, i.e., there exist polynomials $P, Q$ such that $g = \frac{P}{Q}$ and $\deg P < \deg Q \leq M$. We prove that for any $\eps>0$ and any $M \in \mathbb{N}$, there exists a universal set $\Lambda \subset \R$ of upper Beurling density less than $1+\eps$ such that the system 
$$\left\{ e^{2\pi i \lambda t } g(t-n) \colon (\lambda, n) \in \Lambda \times \Z \right\}$$
 forms a frame in $L^2(\R)$ for any well-behaved rational function $g$.
\end{abstract}

\section{Introduction}

For $(\lambda, \mu) \in \R^2$, define a time-frequency shift operator $\pi_{\lambda, \mu} $ on $L^2(\R)$ by the rule
$$\pi_{\lambda, \mu} g (t):= e^{2\pi i \lambda t} g(t - \mu), \quad  g\in L^2(\R).$$
Now, for a fixed $g \in L^2(\R)$ and a countable set $\LL \subset \R^2$, we define a {\it Gabor system} $\G(g, \LL)$ as follows:
$$\G(g, \LL) := \{\pi_{\lambda, \mu} g \mid (\lambda, \mu) \in \LL\}.$$
The system $\G(g,\LL)$ is called a {\it Gabor frame} if for some constants $A, B >0$ one has
\begin{equation}
A\|f\|^2_2\leq \sum_{(\lambda, \mu) \in \LL}|\langle f, \pi_{\lambda, \mu}g \rangle|^2\leq B\|f\|^2_2, \text{ for any } f\in L^2(\mathbb{R}),
\label{frameineq}
\end{equation}
where $\langle \cdot, \cdot \rangle$ denotes the standard scalar product in $L^2(\R)$. Inequality (\ref{frameineq}) is a {\it frame inequality}.
\par
Recently, in \cite{BKL1}, a technique for the class of rational functions was developed in the classical case where $\LL = \alpha \Z \times \beta \Z$. We aim to construct a universal Gabor frame for rational functions by developing a similar technique for a non-classical configuration of the index set $\LL$.

\subsection{The case of simple poles}
Here we formulate our main result for the class of rational functions with only simple poles.

\begin{definition}\label{def:K}
Let $\K(N)$ be the class of rational functions $g \in L^2(\R)$ with simple poles and of degree $N$, i.e., of the form
 $$g(t) = \sum_{k=1}^{N} {{a_k} \over {t - i w_k}}, \text{ where } a_k \in \CC \text{ and } w_k \in \CC \setminus i\R, $$
 such that
\begin{equation}\label{eq:defK}
\sum\limits_{k=1}^N a_k e^{2\pi w_k t} \ne 0 \text{ for any } t <0. 
\end{equation}
\end{definition}
We normalize the Fourier transform as follows:
$$\widehat{f}(z) = \int_{\R} f(t) e^{2\pi i t z} dt.$$

\begin{remark}
Note that if $w_k>0$ for any $k$, then $\sum\limits_{k=1}^N a_k e^{-2\pi w_k t} \theta(t)$ is (up to a multiplicative constant) the Fourier transform of $g(t)$, where $\theta$ is the Heaviside function. Thus, condition (\ref{eq:defK}) in Definition \ref{def:K} can be rewritten simply as
$$\widehat{g}(t) \ne 0 \quad \text{for any } t >0.$$
\end{remark}
Note that for a set $\Lambda \subset \R$, its {\it upper Beurling density} $D(\Lambda)$ is defined by the formula
$$D(\Lambda) = \limsup_{a \to \infty} \sup_{R \in \R} {{\# \{x \in \Lambda \mid x \in [R, R+a]\}} \over {a}}.$$
Throughout the remainder of this paper, we shall refer to $D(\Lambda)$ simply as the {\it density} of $\Lambda$. We aim to prove the following universal result.\\
$\!$\par
{\bf Theorem 1}. {\it For any $\eps >0$ and any $N \in \N$, there exists a set $\Lambda = \Lambda(\eps, N) \subset \R$ with upper density $D(\Lambda) \leq 1+\eps$ such that the system
$$\G(g, \Lambda\times \Z) := \{e^{2\pi i \lambda t} g(t - n) \mid (\lambda, n) \in \Lambda \times \Z\}$$
forms a frame in $L^2(\R)$ for any rational function $g \in \K(N)$.}\par
$\!$\par
Note that the condition on $D(\Lambda)$ is sharp. Indeed, for such a system to form a frame, we must have $D(\Lambda)\ge 1$. So, the condition $D(\Lambda) \leq 1+\eps$ implies that $D(\Lambda)$ {\it is arbitrarily close to the critical density}.\par
As an interesting consequence, we obtain the following corollary:
\par\medskip
{\bf Corollary 1}. {\it For any $\eps >0$ and any $N \in \N$, there exists a set $\Lambda = \Lambda(\eps, N) \subset \R$ with density $D(\Lambda) \leq 1+\eps$ such that the system
$$\G(g, \Lambda\times \Z) := \{e^{2\pi i \lambda t} g(t - n) \mid (\lambda, n) \in \Lambda \times \Z\}$$
forms a frame in $L^2(\R)$ for any $g(t) = \sum_{k=1}^{N} {{a_k} \over {t - i w_k}} $ such that $w_k>0$ and $\widehat{g}(t) \ne 0$ on $(0, +\infty)$.}

\subsection{The general case}
In fact, a more general result holds. Using our technique, we prove the existence of a universal set for any well-behaved rational function, not only for those with simple poles. \par
To define the class $\K_1(M)$ of well-behaved rational functions, one should interpret the natural condition $\widehat{g}(t) \ne 0 \text{ for any } t >0$ in the case of poles of order greater than 1.\par

\begin{definition}\label{def:K2}
For any $M \in \mathbb{N}$, let $\K_1(M)$ be the class of rational functions of degree $M$, i.e., of the form
 $$g(t) = \sum_{k=1}^{N} {{a_k} \over {(t - i w_k)^{j_k}}}, \text{ where } a_k \in \CC,  w_k \in \CC \setminus i\R \text{  and  } \sum_{k=1}^N j_k = M,$$
 such that  
\begin{equation}\label{eq:defK2}
\sum_{k=1}^N a_k e^{2\pi w_k t} {{(2\pi i)^{j_k-1}} \over {(j_k-1)!}} t^{j_k-1} \ne 0 \text{ for any } t <0.
\end{equation}
\end{definition}

\begin{remark} 
In the case of simple poles, i.e., $j_1 = j_2 = \ldots = j_N = 1$ and $M= \sum_k j_k = N$, condition \eqref{eq:defK2} reduces to $\sum_{k=1}^N a_k e^{2\pi w_k t} \ne 0$ for any $t <0$. Thus, $\K(M) \subset \K_1(M)$.
\end{remark}
Moreover, if we have $w_k > 0$ for all $1 \leq k \leq N$, then both formulas (\ref{eq:defK}) and (\ref{eq:defK2}) are equivalent to the condition
$$\widehat{g}(t) \ne 0 \text{ for any } t >0.$$

Now we are ready to state our second main result.\\
$\!$\par
{\bf Theorem 2}. {\it For any $\eps >0$ and any $M \in \N$, there exists a set $\Lambda = \Lambda(\eps, M) \subset \R$ with density $D(\Lambda) \leq 1+\eps$ such that the system
$$\G(g, \Lambda\times \Z) := \{e^{2\pi i \lambda t} g(t - n) \mid (\lambda, n) \in \Lambda \times \Z\}$$
forms a frame in $L^2(\R)$ for any rational function $g \in \K_1(M)$.}\par

\subsection{State of the Art} 
For a given window function $g$, a fundamental problem in time-frequency analysis is to determine the configurations of the index set $\LL \subset \R^2$ for which the Gabor system $\G(g, \LL)$ forms a frame in $L^2(\R)$. 

The classical case $\LL = \alpha \Z \times \beta \Z$ for $\alpha, \beta >0$ is the most studied.  For example, a complete description of the frame set, i.e., the set of parameters $(\alpha, \beta) \in \R^2$ such that $\G(g, \alpha \Z \times \beta \Z)$ forms a frame in $L^2(\R)$, has been established for families of (Gaussian) totally positive functions of finite type (see \cite{Gro1, Gro2}), generalizing classical results \cite{L, S, Jans2, Jans3, JansStr}. More recently, full characterizations were obtained for rational functions of Herglotz type (see \cite{BKL1}), shifted sinc-functions (see \cite{BAVS}), and a wide class of strictly decreasing continuous functions supported on a semi-axis (see \cite{BK}). In all these lattice settings, a well-known necessary density condition applies, requiring the critical lattice density to satisfy $D(\LL) = 1/\alpha\beta \ge 1$.

When moving to non-classical index sets $\LL \subset \R^2$, the behavior of Gabor frames becomes significantly more intricate. The answer in the general case is not known even for the Cauchy kernel $g(t) = {{1} \over {t-iw}}$. The best known result in this case is a criterion for $\G({{1} \over {t-iw}}, \LL)$ to form a frame (see \cite{BKL2}) in the case of direct product configurations $\LL = L \times M$. \par 
A prominent line of research addresses the existence of Gabor frames near the critical density. In a landmark paper, Balan, Casazza, and Landau introduced a quantitative approach to the redundancy of infinite frames, proving that any $\ell^1$-localized frame with redundancy strictly greater than one contains a subframe with redundancy arbitrarily close to one (see \cite{Balan}). Recently, Bownik and van Velthoven \cite{Bownik} generalized this framework to general reproducing kernel Hilbert spaces on metric measure spaces using a selector form of Weaver's conjecture. Crucially, they settled open questions regarding the existence of Gabor frames near the critical density for nonlocalized windows, demonstrating that such frames can be obtained by thinnings of highly overcomplete systems.\par
Note that the results mentioned above are non-constructive and depend on the specific choice of the window function $g$. Therefore, it is of special interest to construct a \textit{single universal set} of near-critical density that simultaneously works for some class of well-behaved functions. 

\subsection{Structure of the paper}
In Section \ref{sect:Outline}, we give a complete outline of the paper, including sketches of the proofs of Theorem 1 and Theorem 2. 
In Section \ref{sect:Crit}, we develop the main criterion for the system $\G(g, \Lambda \times \Z)$ to form a frame in $L^2(\R)$. We present the case of simple poles separately for the sake of clarity. 
In Section \ref{sect:Lambda}, we construct the desired universal frame set $\Lambda$. 
In Section \ref{sect:Oper}, we reduce the problem to the invertibility of a certain matrix operator defined on $\ell^2(\Z)$ and study its inner structure. 
In Section \ref{sect:Lemma}, we deduce the main technical lemma, which is used in the proofs of both main theorems. 
Finally, in Sections \ref{sect:Th1} and \ref{sect:Th2}, we provide the proofs of Theorem 1 and Theorem 2, respectively.

\subsection{Notations}
We denote by $PW_a$ the Fourier image of $L^2(0,a)$. The notation $f^{(n)}(t)$ denotes the $n$-th derivative of a function $f$. Let us also denote by $\{x\}$ the fractional part of $x\in \R$ and by $[x]$ the largest integer which is strictly smaller than $x$. By $\# A$ we denote the cardinality of a set $A$. We use the notation $M_{n,m}(\CC)$ for the space of all complex-valued matrices with $n$ rows and $m$ columns. In the case where $n=m$, we simply write $M_n(\CC)$. \par
Finally, by $U(z) \lesssim V(z)$ (equivalently $U(z) \gtrsim V(z)$), we mean that there exists a constant $C>0$ such that $U(z) \leq C V(z)$ holds for all $z$ in the set under consideration. We write $U(z) \asymp V(z)$ if both $U(z) \lesssim V(z)$ and $U(z) \gtrsim V(z)$ hold.\par
Despite the fact that $\K(N) \subset \K_1(N)$, we retain both symbols for the sake of clarity in notation.

\section{Outline of the paper}\label{sect:Outline}
Fix $M \in \mathbb{N}$ and let $g \in \K_1(M)$. First, we establish a criterion for the system $\G(g, \Lambda \times \Z)$ to form a frame in $L^2(\R)$. In Section 3 we define a collection of functions $\{m_s\}_{s=0}^{M-1}$ (see Definition \ref{def:m}) and prove the following
\begin{theorem}\label{main:crit} For any $g \in \K_1(M)$ and any $\Lambda = \{\lambda_m \mid m \in \Z\} \subset \R$ the system $\G(g, \Lambda \times \Z)$ forms a frame in $L^2(\R)$ if and only if for any $G \in L^2(\R)$ we have
\begin{equation}\label{eq:maincrit}
\|G\|_2^2 \asymp \sum_{m\in \Z} \int_0^1 \left|\sum_{s=0}^{M-1} G(t+\lambda_m + s) m_s(t) \right|^2 dt.
\end{equation}
\end{theorem}
Since the functions $\{m_s \}_{s = 0}^{M-1}$ are bounded uniformly in [0,1], the upper bound in Equation (\ref{eq:maincrit}) always holds. So, we only need to verify the lower bound.

\medskip

Let $\xi \in [0,1)$ and $G \in L^2(\R)$. For almost every $\xi \in [0,1)$ we have $\{G(\xi+n)\}_{n \in \Z} \in \ell^2(\Z)$. Now we can consider $G$ as a collection of sequences from $\ell^2(\Z)$ indexed by $\xi$:
$$G \mapsto \{G(\xi+n)\}_{n \in \Z}, \quad \xi \in [0,1).$$
We have 
$$\|G\|_{L^2(\R)}^2 = \int_0^1 \| \{G(\xi+n)\}_{n \in \Z} \|_{\ell^2 (\Z) }^2 d\xi. $$
%Define
%$$L'_{\xi} \colon G \mapsto \left\{ G(t+m_k)(e^{-2\pi t} - e^{2\pi t}) - G(t+m_k+1) (e^{2\pi} e^{-2\pi t} - e^{-2\pi} e^{2\pi t})   \right\}_{k \in \Z, t \in T_k}.$$
For a fixed $\xi \in [0,1)$ we are looking for elements from the sequence $\{G(\xi+n)\}_{n\in \Z}$ on the right-hand side of (\ref{eq:maincrit}). This leads us to the following construction. Consider the equation
$$t + \lambda_k = \xi \mod 1, \quad t \in [0,1).$$
Observe that for a fixed $\xi \in [0,1)$ and for a fixed $k \in \Z$ there is only one solution $t_k \in [0,1)$ of such an equation. Define $L_{\xi} \colon L^2(\R) \to \ell^2(\Z)$ for any $\xi \in [0,1)$ by the formula
\begin{equation}\label{def:L}
L_{\xi} G = \left\{\sum_{s=0}^{M-1} G(t_k+\lambda_k) m_s(t_k)\right\}_{k \in \Z}.
\end{equation}
Since for any $k \in \Z$ we have $t_k+\lambda_k = \xi +n$ for some $n \in \Z$, the right-hand side depends only on the sequence $\{G(\xi+n)\}_{n\in \Z}$. Hence, we can consider $L_{\xi}$ as an operator from $\ell^2(\Z)$ to $\ell^2(\Z)$. From Theorem \ref{main:crit} it follows that $\G(g, \Lambda \times \Z)$ forms a frame if and only if for some $c>0$
\begin{equation}\label{eq:opercrit}
    \|L_{\xi} x\|^2 \ge c \|x\|^2 \text{ for any } x \in \ell^2(\Z)
\end{equation}
uniformly with respect to almost all $\xi \in [0,1)$.

\subsection{The operator $L_{\xi}$}
In Section \ref{sect:Lambda}, we construct the desired universal set $\Lambda$. For a fixed $\xi \in [0,1)$ and $n \in \Z$, consider the equation 
\begin{equation}\label{eq:mod}
    \xi + n = t+\lambda_k,
\end{equation}
for $t\in [0,1)$ and $\lambda_k \in \Lambda$. We are looking for the rows which corresponds to a given number $\xi + n$. By the construction of $\Lambda$, there is always a solution $\lambda_k$ to Equation \eqref{eq:mod} and the number of solutions is uniformly bounded. So, there are two regimes in the matrix of $L_{\xi}$:
$$I = \begin{pmatrix} \star & \star & \ldots & \star  \\
\vdots & \vdots && \vdots \\
 \star & \star & \ldots & \star
\end{pmatrix},  \quad II =
\begin{pmatrix}
 \star & \star & \ldots & \star  & 0 & 0 & \ldots  &0 \\
 0 & \star & \star & \ldots & \star  & 0 & \ldots  &0\\
 &   & \ddots &   \ddots & & \ddots& &\vdots  \\
0  &  \ldots & 0 & \star &  \star  & \ldots& \star & 0\\
0  & \ldots & 0 & 0 & \star & \star  & \ldots& \star
\end{pmatrix}
$$
The first regime corresponds to the case when there is more than one solution to Equation (\ref{eq:mod}), while the second regime corresponds to the case when there is only one solution. Hence, the matrix of $L_{\xi}$ consists of submatrices of the form:
\begin{equation}\label{eq:matrix}
\newcommand*{\tempb}{\multicolumn{1}{|c}{0}}
\newcommand*{\tempv}{\multicolumn{1}{|c}{\vdots}}
\begin{pmatrix} 
\star & \star & \ldots & \star & \tempb &  0  & 0&\ldots  &0\\
\star & \star & \ldots & \star  & \tempb  & 0 & 0 & \ldots  & 0\\
\vdots & \vdots&  & \vdots &   \tempv& \vdots  &  \vdots  & & \vdots \\
\star & \star & \ldots & \star   & \tempb & 0 &  0 & \ldots  &0 \\
\cline{1-4}
0 & \star & \star & \ldots & \star  & 0 & 0 & \ldots  &0 \\
0 & 0 & \star & \star & \ldots & \star  & 0 & \ldots  &0\\
\vdots & &   & \ddots &   \ddots & & \ddots& &\vdots  \\
0 & \ldots  & 0 & 0 & \star &  \star  & \ldots& \star & 0\\
0 & \ldots  & 0 & 0 & 0 & \star & \star  & \ldots& \star\\
\end{pmatrix}.
\end{equation}

Informally, this means that the first regime helps the frame inequality, while the second regime corresponds to ordinary rows in the matrix. We discuss the inner structure of $L_{\xi}$ more closely in Section \ref{sect:Oper}.

\subsection{Proofs of the Theorems}
We use Theorem 1 in the proof of the more general Theorem 2, so first we prove Theorem 1 in Section \ref{sect:Th1}. We reduce the problem of invertibility of $L_{\xi}$ for almost every $\xi \in [0,1)$ to the problem of uniform invertibility of submatrices of $L_{\xi}$ defined in \eqref{eq:matrix}. Next, we directly compute the determinants using the main technical lemma from Section \ref{sect:Lemma} and obtain the desired invertibility. \par 
In order to prove Theorem 2 we fix an arbitrary $g \in \K_1(M)$ and approximate it by a function with simple poles $g_{\eps} \in \K(M)$, using the following remark:
\begin{remark}\label{rem:asymp}
For any $f\in C^{\infty}(\R)$ and any sufficiently small $\eps>0$ we have
$$f^{(n)} (t) \sim {{1} \over {\eps^n}} \sum_{k=0}^n (-1)^{n-k} \binom{n}{k} f(t+k\eps),$$
where the strict meaning of $\sim$ is discussed later in Section \ref{sect:Th2}.
\end{remark}
It turns out that the operators $L_{\xi}(g)$ and $L_{\xi}(g_{\eps})$ are close enough (here we mean that $L_{\xi}(g)$ is constructed from $g$ and $L_{\xi}(g_{\eps})$ is constructed from $g_{\eps}$). We approximate the determinants of $L_{\xi}(g)$ by those of $L_{\xi}(g_{\eps})$, which are studied in Section \ref{sect:Th1}. Finally, using Theorem 1 and the invertibility of $L_{\xi}(g_{\eps})$ for almost every $\xi \in [0,1)$ we deduce the desired invertibility of $L_{\xi}(g)$.

\section{Main criterion}\label{sect:Crit}
In this section we prove Theorem \ref{main:crit}. We note that the technique developed here is similar to the proofs of Theorem 1.1 in \cite{BKL1} and Theorem 1.2 in \cite{BAVS}. So we use the notation from \cite{BKL1} and \cite{BAVS}. \par
To facilitate understanding of the paper, we split the section into two parts: the case of simple poles and the general case. In fact, the case of simple poles is a direct corollary of the general case. However, we want to state this separately in order  to provide an illustrative example free of heavy combinatorial technicalities. 

\subsection{The case of simple poles}
Fix $N \in \mathbb{N}$. Let $g\in \K(N)$ have the form $g(t) = \sum \limits_{k=1}^N {{a_k} \over {t-iw_k}}$ for some $a_k \in \CC$ and $w_k \in \CC \setminus i\R$. Fix a function $f\in L^2(\R)$ and let $\Lambda = \{\lambda_n \mid n \in \Z\}$ be (any) countable subset of $\R$ for a while. One may assume $\lambda_n \leq \lambda_{n+1}$ for any $n \in \Z$. We define $\pi_{m,n}  g(t) := e^{2\pi i \lambda_m t} g(t-n)$ for any $m, n \in \Z$. Hereinafter we use the symbol $\pi_{m,n} $ in the sense defined above.\par
Now, in order to show that $\G(g; \Lambda \times \Z)$ forms a frame, one needs to prove that
$$\| f\|_2^2 \asymp \sum_{(m,n) \in \Z \times \Z}  | \langle \pi_{m,n}  g, f \rangle|^2.$$
By duality we have
$$\|f\|_2 \asymp \left( \sum_{(m,n) \in \Z^2}  |\langle \pi_{m,n}  g, f\rangle |^2 \right)^{1/2}= \sup\limits_{c \in \ell^2(\Z^2): \|c\|=1}   \left| \sum_{(m,n) \in \Z^2} c_{m,n} \langle \pi_{m,n}  g, f\rangle \right|,$$
 Assume for now that $f \in C^{\infty}(\R)$ has compact support. Now we have
$$\sup\limits_{c \in \ell^2(\Z^2): \|c\|=1}  \left| \sum_{(m,n) \in \Z^2} c_{m,n} \langle \pi_{m,n}  g, f \rangle \right| = \sup\limits_{c \in \ell^2(\Z^2): \|c\|=1}  \left|  \sum_{(m,n) \in \Z^2} c_{m,n} \int_{\R} e^{2\pi i \lambda_m t} \sum_{k=1}^N {{a_k} \over {t - i w_k - n}} \overline{f(t)} dt \right|.$$

Following \cite{BKL1} and \cite{BAVS}, define a function 
\begin{equation}\label{def:hm}
h_m(z) := (1-e^{2\pi i z}) \sum_{n \in \Z} {{c_{m,n}} \over {z-n}}.
\end{equation}
We have $\widecheck{h_m}(t) =\const \cdot \sum_{n \in \mathbb{Z}} c_{m,n} e^{-2\pi i n t} \in L^2(0,1)$, since $\{c_{m,n}\}_{n \in \Z} \in \ell^2(\Z)$. Hence, $h_m \in PW_1$. Now put $P(t) = \prod_{j=1}^N (1-e^{2\pi i (t-iw_j)})$ and $p_j(t) = 1-e^{2\pi i (t-iw_j)}$. Put $P_k(t) = {{P(t)} \over {p_k(t)}} = \prod_{k \ne j} (1-e^{2\pi i (z-iw_j)})$ and observe that for any $k\in \Z$ one has
\begin{equation}\label{eq:A}
P_k(t) = \sum_{l=0}^{N-1} A_{k,l} e^{2\pi i l t}, \text{ where } A_{k,l} = (-1)^l \sum_{\substack{\{j_1, \dots, j_l\} \subset \{1, \dots, N\} \setminus \{k\} \\ j_1 < \dots < j_l}} e^{2\pi (w_{j_1} + \ldots + w_{j_l})}.
\end{equation}
It is easy to see that
$$\sum_{n \in \Z} {{c_{m,n}} \over {t-iw_k-n}} = {{P_k(t)} \over {P(t)}} h_m (t-iw_k).$$
Now we have
$$ \|f\|_2 \asymp \sup\limits_{c \in \ell^2(\Z^2): \|c\|=1}  \left| \sum_{m \in \Z} \int_{\R} \frac{\overline{f(t)} e^{2\pi i \lambda_m t}}{\prod_{k=1}^N (1 - e^{2\pi i(t - iw_k)})}  \sum_{k=1}^N a_k P_k(t) h_m (t-iw_k) dt \right|.$$

Define $F$ by the rule 
$$F(t) = \frac{f(t)}{\prod_{k=1}^N (1 - e^{-2\pi i(t + i\overline{w}_k)})}.$$
 It is clear $F \in L^2(\R)$. Moreover, $\|F\|_2 \asymp \|f\|_2$, since the denominator is bounded above and below by some positive constant. For a fixed $m \in \Z$ we obtain
$$\int_{\R} \overline{F(t)e^{-2\pi i \lambda_m t}} \sum_{k=1}^N a_k P_k(t) h_m (t-iw_k) dt = \int_{\R} \overline{F(t)e^{-2\pi i \lambda_m t}} \sum_{l=0}^{N-1} e^{2\pi i lt} M_l(t) dt,$$ 
where 
$$M_l(t) = \sum_{k=1}^N a_k A_{k,l} h_m(t - iw_k).$$ 
Define $G$ as the inverse Fourier transform of $\overline{F}$ and observe $\|f\|_2 \asymp \|G\|_2$. Put 
\begin{equation}
\label{eq:m}
m_l(t) = \sum\limits_{k=1}^N a_k A_{k, l} e^{2\pi t w_k}.
\end{equation}
By Parseval's identity we have
$$ \|f\|_2 \asymp \sup\limits_{c \in \ell^2(\Z^2): \|c\|=1}  \left| \sum_{m \in \Z} \int_{\R} \overline{F(t)e^{-2\pi i \lambda_m t}} \sum_{l=0}^{N-1} e^{2\pi i lt} M_l(t) dt \right| =$$
$$= \sup\limits_{c \in \ell^2(\Z^2): \|c\|=1}  \left| \sum_{m \in \Z} \int_0^1 \left( \sum_{l=0}^{N-1} G(x + \lambda_m + l) m_l(x) \right) \cdot \widecheck{h_m} (x) dx \right|.$$
As the supremum is taken over all sequences $\{c_{m,n}\}$ on the unit sphere of $\ell^2(\Z \times \Z)$, the sequence of functions $\{\widecheck{h_m}\}_{m\in \Z}$ runs through the sphere, meaning that $\sum_m \|\widecheck{h_m}\|^2_2 = \const$, where the constant depends on the definition of the Fourier transform. By choosing 
$$\widecheck{h_m} (x) :=\sqrt{\const} \cdot \frac{ \overline{ \sum_{l=0}^{N-1} G(x + \lambda_m + l) m_l(x)} }{\left( \sum_{k\in \Z} \left\|\sum_{l=0}^{N-1} G(x + \lambda_k + l) m_l(x)   \right\|_2^2 \right)^{1/2}  }$$
we obtain
$$\|G\|_2 \asymp \|f\|_2 \gtrsim  \left( \sum_{m \in \Z} \int_0^1 \left| \sum_{l=0}^{N-1} G(x + \lambda_m + l) m_l(x)  \right|^2 dx \right)^{1/2}.$$

So we have established the criterion for a set $\Lambda$ to generate a frame in the case when all poles are simple.

\begin{theorem}\label{simple:crit} For any $g \in \K(N)$ and any $\Lambda = \{\lambda_n \mid n \in \Z\} \subset \R$ the system $\G(g; \Lambda \times \Z)$ forms a frame in $L^2(\R)$ if and only if there exists $C_1, C_2>0$ such that for any $G \in L^2(\R)$ we have
\begin{equation}\label{eq:crit}
C_1 \|G\|_2^2 \leq \sum_{m \in \Z} \int_0^1 \left| \sum_{l=0}^{N-1} G(t + \lambda_m + l) m_l(t)  \right|^2 dt \leq C_2 \|G\|_2^2.
\end{equation}
\end{theorem}
\begin{proof} The lower bound has already been proven, while  the upper bound can be obtained using the Cauchy-Schwarz inequality via the fact that all the $m_l(x)$ are uniformly bounded from above on $[0,1]$.
\end{proof}

\subsection{The general case}
In this subsection we prove Theorem \ref{main:crit}. Now we deal with the class $\K_1(M)$ for $M \in \mathbb{N}$. The window function $g$ now has the form
$$g(t) = \sum_{k=1}^N {{a_k} \over {(t-iw_k)^{j_k}}}, \quad a_k \in \CC, w_k \in \CC \setminus i\R \text{  and  } \sum_{k=1}^N j_k = M.$$
%\begin{remark} \label{rem:fourier} According to our definition of Fourier transform $\mathcal{F}$ we obtain
%$$\mathcal{F}(e^{2\pi i t l} f^{(k)}(t))(z) = (-1)^{k}   (z+l)^k \cdot (2\pi i)^k  \widehat{f}(z + l)\quad \text{for any } l, k \in \N.$$
%\end{remark}

We may assume $1 \leq j_1 \leq \ldots \leq j_N$. Define $p_k(t):= \left(1 - e^{2\pi i (t-iw_k)}\right)^{j_k}$ and let $P$ be the product $P = \prod_{k=1}^N p_k$. Also define $P_k (t)= P(t)/p_k(t) =\prod_{l\ne k} (1-e^{2\pi i (t - iw_l)})^{j_l} $. As in the case of simple poles the function $F = f/\overline{P}$ lies in $L^2(\R)$ and we have $\|F\|_2 \asymp \|f\|_2$. \par
Using the technique developed in Subsection 3.1, we obtain
\begin{multline}\label{eq:prelim} 
\left( \sum_{n,m \in \Z} |(f, \pi_{m,n}  g)|^2 \right)^{1/2} \asymp \\
\sup_{c \in \ell^2(\Z^2): \|c\|=1} \left| \sum_{m \in \Z} \int_{\R} \overline{F(t) e^{-2\pi i \lambda_m t}} \cdot \sum_{k=1}^N a_k P_k(t) g_{m, j_k} (t-iw_k) dt \right|,
\end{multline}

\subsection{The trick}
Fix $m \in \Z$ for a while. Using the function $h_m$ defined in Equation (\ref{def:hm}), one can check that
$$ \sum_n {{c_{m,n}} \over {(z - n)^k}} = {{(-1)^{k-1}} \over {(k-1)!}} \sum_{l=0}^{k-1} \binom{k-1}{l} h_m^{(k-l-1)}(z)  \left( {{1} \over {1-e^{2\pi i z}}} \right)^{(l)}. $$
Put $f_l(z) = \left( {{1} \over {1-e^{2\pi i z}}} \right)^{(l)}$ and note that $f_l$ is a {\it rational function} of the variable  $e^{2\pi iz}$ with maximal degree $l+1$ in the denominator. It is clear that for any integer $k\ge 1$ we have 
$$h_m^{(k)} (z) =  (-1)^k k! (1-e^{2\pi i z}) \sum_n {{c_{m,n}} \over {(z-n)^{k+1}}} - \sum_{l=0}^{k-1} \binom{k}{l}  (2\pi i)^{k-l} e^{2\pi i z} \left(\sum_n {{c_{m,n}} \over {z-n}} \right)^{(l)}.$$
We multiply both sides by $(1-e^{2\pi i z})^k (-1)^k /k!$ and perform a shift $k \mapsto k-1$, which proves the following Lemma.
\begin{lemma} For any $k \in \mathbb{N}$ and any $m \in \Z$ we obtain
\begin{equation}\label{eq:trick}
\begin{split}
g_{m,k} (z) &= h_m^{(k-1)} (z) {{(e^{2\pi i z} -1)^{k-1}} \over {(k-1)!}} + \\
&+e^{2\pi i z} {{(e^{2\pi i z} -1)^{k-1}} \over {(k-1)!}} \sum_{l=0}^{k-2} (2\pi i)^{k-l-1} \binom{k-1}{l} \sum_{j=0}^l \binom{l}{j} h_m^{(l-j)} (z) f_j(z).
\end{split}
\end{equation}
For $k=1$, the second term on the right-hand side of \eqref{eq:trick} vanishes.
\end{lemma}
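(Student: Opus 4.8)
The plan is to read off (\ref{eq:trick}) from the two Leibniz expansions recorded just above, arranged so that the highest-order pole series becomes $g_{m,k+1}$ while every lower-order piece is carried by $h_m$ and the functions $f_j$. Write $S_p(z) := \sum_{n \in \Z} c_{mn}/(z-n)^p$ as a shorthand, so that by definition $h_m = (1-e^{2\pi i z}) S_1$ and $g_{m,k} = (1-e^{2\pi i z})^k S_k$.

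First I would differentiate $h_m = (1-e^{2\pi i z})S_1$ exactly $k$ times by the Leibniz rule. Using $(1-e^{2\pi i z})^{(q)} = -(2\pi i)^q e^{2\pi i z}$ for $q \ge 1$ together with the termwise identity $S_1^{(k)} = (-1)^k k!\, S_{k+1}$, and peeling off the single top-order summand, this is exactly the displayed formula
$$h_m^{(k)}(z) = (-1)^k k!\,(1-e^{2\pi i z}) S_{k+1}(z) - \sum_{l=0}^{k-1} \binom{k}{l} (2\pi i)^{k-l} e^{2\pi i z} S_1^{(l)}(z).$$

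Next I would multiply through by $(-1)^k(1-e^{2\pi i z})^k/k! = (e^{2\pi i z}-1)^k/k!$. The top-order term on the right collapses to $(1-e^{2\pi i z})^{k+1} S_{k+1} = g_{m,k+1}$, so solving for $g_{m,k+1}$ gives
$$g_{m,k+1}(z) = \frac{(e^{2\pi i z}-1)^k}{k!} h_m^{(k)}(z) + e^{2\pi i z}\, \frac{(e^{2\pi i z}-1)^k}{k!} \sum_{l=0}^{k-1} \binom{k}{l} (2\pi i)^{k-l} S_1^{(l)}(z).$$
It then remains only to express each $S_1^{(l)}$ intrinsically: since $S_1 = h_m \cdot (1-e^{2\pi i z})^{-1}$, a second Leibniz expansion gives $S_1^{(l)} = \sum_{j=0}^l \binom{l}{j} h_m^{(l-j)} f_j$. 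Substituting this and performing the index shift $k \mapsto k-1$ yields precisely (\ref{eq:trick}), with the coefficients $\binom{k-1}{l}\binom{l}{j}$ and the power $(2\pi i)^{k-l-1}$ falling into place. The base case $k=1$ is immediate and separate: the outer sum is then empty and $g_{m,1} = (1-e^{2\pi i z})S_1 = h_m$, which is the stated convention.

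I do not expect a real obstacle, since everything is formal; the only genuine care is bookkeeping. Two points deserve a line of justification: the termwise differentiation of $S_1$, which is legitimate because $\{c_{mn}\}_n \in \ell^2(\Z)$ makes $\sum_n c_{mn}/(z-n)$ converge locally uniformly on compact subsets of $\CC \setminus \Z$ (indeed $h_m \in PW_1$), and the correct tracking of binomial factors through the two nested Leibniz steps so that the shift $k\mapsto k-1$ reproduces the exact coefficients in (\ref{eq:trick}).
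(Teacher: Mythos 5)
Your proof is correct and takes essentially the same route as the paper: the paper's own argument consists of exactly your two Leibniz expansions (its first displayed identity for $\sum_n c_{mn}/(z-n)^k$ is your formula $S_1^{(l)} = \sum_{j=0}^{l}\binom{l}{j} h_m^{(l-j)} f_j$, and its second is your expansion of $h_m^{(k)}$), followed by the same multiplication by $(1-e^{2\pi i z})^k(-1)^k/k!$ and the shift $k \mapsto k-1$. Your explicit justification of termwise differentiation via locally uniform convergence of $\sum_n c_{mn}/(z-n)$ on compact subsets of $\CC\setminus\Z$ is a detail the paper leaves implicit.
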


\begin{remark}\label{rem:pol}
The equation (\ref{eq:trick}) represents a {\it polynomial} formula in the variables $h_m^{(l)}(z)$ for $0 \leq l \leq k-1$ since the denominators in the second summand have their degrees at most $k-1$ and so they are being eliminated by $(e^{2\pi i z} -1)^{k-1}$. Moreover, this formula is $\mathrm{polynomial}$ with respect to the variables $e^{2\pi i z l}$ for $0\leq l \leq k-1$.
\end{remark}

\begin{example} For $k=1$ we trivially have $g_{m,1} = h_m$ from \eqref{def:hm}. For $k=2$ we obtain the formula
$$g_{m,2}(z) = -(1-e^{2\pi i z}) h_m'(z) -2\pi i \cdot e^{2\pi i z} h_m(z).$$
\end{example}

\subsection{The criterion}
Combining together equations (\ref{eq:prelim}) and (\ref{eq:trick}) we obtain
$$\left( \sum_{m,n\in\mathbb{Z}}|\langle f,\pi_{m,n}g\rangle|^2 \right)^{1/2}\asymp \sup\limits_{c \in \ell^2(\Z^2): \|c\|=1}  \left| \sum_m \int_{\mathbb{R}} \overline{F(t)e^{-2\pi i\lambda_m t}} \sum_{k=1}^N a_k P_k(t) \cdot \Psi_{m,k}(t) dt \right|,$$
where $\Psi_{m,k}(t)$ is given by the expansion
$$\Psi_{m,k}(t) = h_m^{(j_k-1)}(t-iw_k)\frac{(e^{2\pi i(t-iw_k)}-1)^{j_k-1}}{(j_k-1)!} + $$
$$+e^{2\pi i (t-iw_k)} {{\left(e^{2\pi i (t-iw_k)} -1\right)^{j_k-1}} \over {(j_k-1)!}}  \cdot \sum_{l=0}^{j_k-2} (2\pi i )^{j_k-l-1} \binom{j_k-1}{l} \sum_{b=0}^l \binom{l}{b} h_m^{(l-b)} (t-iw_k) f_b(t-iw_k).$$

Now we need to combine all functions with multipliers $e^{2\pi i d t}$ together. Fix $k \in \N$ for a while. By Remark \ref{rem:pol} one can choose the coefficients $a_{l,d}$ such that
\begin{equation}\label{eq:g}
g_{m,j_k} (t-iw_k) = \sum_{l,d=0}^{j_k-1} a_{l,d} h_m^{(l)} (t-iw_k)  e^{2\pi i t d}.
\end{equation}
The coefficients clearly depend on $k$, but to simplify the notation we do not write the index $k$ here. Observe that we have
\begin{equation}\label{eq:Pk}
P_k(t) = \prod_{l \ne k} \sum_{s=0}^{j_l} (-1)^s \binom{j_l}{s} e^{2\pi i t s} e^{2\pi w_l s } \stackrel{def}{=}  \sum_{s=0}^{M-j_k} A_{k, s} e^{2\pi i t s},
\end{equation}
where the coefficients $A_{k,s}$ are defined in the preceding equation. Combining Equations (\ref{eq:g}) and (\ref{eq:Pk}) together, we obtain
\begin{multline}\label{eq:Pg}
P_k(t) g_{m, j_k}(t-iw_k) = \sum_{l,d=0}^{j_k-1} \sum_{s=0}^{\sum_{l \ne k} j_l} a_{l,d} A_{k,s} h_m^{(l)} (t-iw_k) e^{2\pi i d t} e^{2\pi i s t} = \\
=\sum_{s=0}^{M-1} \sum_{l=0}^{j_k-1} B^{(k)}_{s,l} h_m^{(l)}(t-iw_k) e^{2\pi i st}
\end{multline}
for some coefficients $B_{s,l}^{(k)}$, where $M = \sum_{k=1}^N j_k$ is the absolute constant. Finally,
$$\sup\limits_{c \in \ell^2(\Z^2): \|c\|=1}  \left| \sum_{m} \int_{\R} \overline{F(t) e^{-2\pi i \lambda_m t}} \cdot \sum_{k=1}^N a_k \sum_{s=0}^{M-1} \sum_{l=0}^{j_k-1} B^{(k)}_{s,l} h_m^{(l)}(t-iw_k) e^{2\pi i st} dt \right| = $$
$$ = \sup\limits_{c \in \ell^2(\Z^2): \|c\|=1}  \left| \sum_{m} \int_{\R} \overline{F(t) e^{-2\pi i \lambda_m t}}  \sum_{s=0}^{M-1}  e^{2\pi i s t} \cdot \bigg( \sum_{k=1}^N \sum_{l=0}^{j_k-1} a_k B_{s,l}^{(k)} h_m^{(l)} (t-iw_k) \bigg) dt \right|.$$
Fix $m \in \Z$ and define $M_s(t)$ by the formula
\begin{equation}\label{eq:Mst}
M_s(t) =  \sum_{k=1}^N \sum_{l=0}^{j_k-1} a_k B_{s,l}^{(k)} h_m^{(l)} (t-iw_k).
\end{equation}
Observe that $h_m^{(l)}$ here varies from $h_m^{(0)}$ to $h_m^{(j_N-1)}$. Denote by $G$ the conjugate of the inverse Fourier transform of $F$ and note that $\|f\|_2 \asymp \|F\|_2 \asymp \|G\|_2$. By the Parseval-Plancherel theorem one has

$$ \sup\limits_{c \in \ell^2(\Z^2): \|c\|=1}  \left| \sum_{m} \int_{\R} \sum_{s=0}^{M-1} \overline{F(t) e^{-2\pi i (\lambda_m + s) t}} \cdot M_s(t) dt \right| = $$
$$=\sup\limits_{c \in \ell^2(\Z^2): \|c\|=1}  \left| \sum_m \int_0^1 \sum_{s=0}^{M-1} G(t+\lambda_m + s) \sum_{k=1}^N \sum_{l=0}^{j_k-1} a_k B_{s,l}^{(k)} \F^{-1}\left(h_m^{(l)} (x-iw_k)\right) (t) dt \right|,$$
where $\mathcal{F}^{-1}$ stands for the inverse Fourier transform. One can check that 
$$\F^{-1}\left(h_m^{(l)} (x-iw_k)\right) (t)  = (2\pi i )^l t^l e^{2\pi w_k t} \widecheck{h_m}(t).$$
\begin{definition} \label{def:m}
For a fixed natural $1\leq s \leq M$ we define 
$$m_s(t) = \sum_{k=1}^N \sum_{l=0}^{j_k-1} a_k B_{s,l}^{(k)} e^{2\pi w_k t} (2\pi i)^l t^l.$$
\end{definition}

\begin{remark}\label{rem:gen}
Observe that if $j_1= \ldots = j_N = 1$, then $m_s$ is equal to those defined in Equation \eqref{eq:m}, since $g_{m,1} = h_m$ by definition and $B_{s, 0}^{(k)} = A_{k, s}$ from Equation \eqref{eq:A}. So, this definition precisely generalizes the functions $m_s$ from the simple poles case.
\end{remark}

Now we have 
$$\|f\|_2 \asymp \sup\limits_{c \in \ell^2(\Z^2): \|c\|=1}  \left| \sum_m \int_0^1 \left(\sum_{s=0}^{M-1} G(t+\lambda_m + s) m_s(t) \right) \widecheck{h_m}(t) dt \right|.$$

As the supremum is taken over all sequences $\{c_{m,n}\}$ on the unit sphere of $\ell^2(\Z \times \Z)$, the sequence of functions $\{\widecheck{h_m}\}_{m\in \Z}$ runs through the sphere, meaning that $\sum_m \|\widecheck{h_m}\|^2_2 = \const$. So, as in the case of simple poles, by choosing 
$$\widecheck{h_m} (x) :=\sqrt{\const} \cdot \frac{ \overline{ \sum_{s=0}^{M-1} G(x + \lambda_m + s) m_s(x)} }{\left( \sum_{k\in \Z} \left\|\sum_{s=0}^{M-1} G(x + \lambda_k + s) m_s(x)   \right\|_2^2 \right)^{1/2}  }$$
we obtain
$$ \|G\|_2 \asymp \|f\|_2 \gtrsim \left( \sum_m \int_0^1 \left|\sum_{s=0}^{M-1} G(t+\lambda_m + s) m_s(t) \right|^2 dt \right)^{1/2}.$$
Hence, we obtain the lower bound, while the upper bound is delivered by the Cauchy-Schwarz inequality and the fact that all the $m_s$ are bounded uniformly from above on $[0,1]$.
\begin{corollary} In the case of simple poles we obtain Theorem \ref{simple:crit}.
\end{corollary}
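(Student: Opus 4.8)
The plan is to obtain Theorem \ref{main:crit} as a pure specialisation of the general criterion just proved, so no analysis beyond that theorem is needed. A window $g \in \K(N)$ lies in $\K_1(N)$ (as recorded in the Example after Definition \ref{def:K}), i.e. every pole is simple, $j_1 = \cdots = j_N = 1$, and consequently $M = \sum_k j_k = N$. Since the general theorem holds for every $g \in \K_1(M)$, it applies verbatim to such $g$ with $M = N$; the entire content of the corollary is to check that its two data — the integer $M$ and the weights $m_s(t)$ of Definition \ref{def:m} — degenerate exactly into the data $N$ and $m_l(t)$ of equation (\ref{eq:m}) that appear in Theorem \ref{main:crit}.

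First I would collapse the index ranges. With $j_k = 1$ the inner summation $\sum_{l=0}^{j_k-1}$ in Definition \ref{def:m} keeps only the term $l = 0$, so the factor $(2\pi i)^l t^l$ becomes $1$ and no derivatives of $h_m$ survive. Correspondingly the trick of equation (\ref{eq:trick}) is trivial: straight from the definition of $g_{m,k}$ one has $g_{m,1}(z) = (1-e^{2\pi i z})\sum_n c_{mn}/(z-n) = h_m(z)$, so $g_{m,1} = h_m$ with no correction terms. Substituting this into (\ref{eq:Pg}) gives $P_k(t)\,h_m(t-iw_k) = \sum_{s=0}^{N-1} B^{(k)}_{s,0}\,h_m(t-iw_k)\,e^{2\pi i s t}$, whence, comparing coefficients, $B^{(k)}_{s,0}$ is precisely the $s$-th Fourier coefficient of $P_k$. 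Comparing with the simple-pole expansion $P_k(t) = \sum_{l=0}^{N-1} A_{k,l}\,e^{2\pi i l t}$ of Section 2 — equivalently, putting $j_l = 1$ in (\ref{eq:Pk}) — identifies $B^{(k)}_{s,0} = A_{k,s}$. Feeding this back into Definition \ref{def:m} yields
$$m_s(t) = \sum_{k=1}^N a_k A_{k,s}\, e^{2\pi w_k t},$$
which is exactly the weight (\ref{eq:m}), while $M = N$ makes the outer range $0 \leq s \leq M-1$ coincide with $0 \leq l \leq N-1$.

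It remains to reconcile the domain of integration. Although the general criterion is displayed with an integral over $\R$, the functions $\widecheck{h_m}$ against which the bracket is tested lie in $L^2(0,1)$ by the Paley--Wiener property $h_m \in PW_1$; hence the pairing $\int_{\R}(\cdots)\widecheck{h_m}(t)\,dt$ sees only $(0,1)$, and the optimisation over the unit sphere of $\ell^2(\Lambda\times\Z)$ that produces the final $\asymp$ delivers exactly $\sum_m \int_0^1 |\cdots|^2\,dt$. With $M = N$ and $m_s = m_l$ this is verbatim the central expression of Theorem \ref{main:crit}, and the two-sided relation $\asymp$ unpacks into the constants $C_1, C_2$ there, the upper bound being the Cauchy--Schwarz argument already recorded in its proof.

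The only genuinely substantive step is the coefficient identification $B^{(k)}_{s,0} = A_{k,s}$: one must verify that the general bookkeeping — the trick (\ref{eq:trick}), the expansion (\ref{eq:Pk}), and their assembly in Definition \ref{def:m} — truly collapses without spurious cross-terms when $j_k = 1$. Because in that regime each $g_{m,1}$ equals $h_m$ on the nose and every derivative factor disappears, this is a finite and transparent computation rather than a new estimate, and the matching of integration domains follows at once from the support of $\widecheck{h_m}$. Thus the corollary follows purely by specialisation, with no analytic input beyond the general theorem itself.
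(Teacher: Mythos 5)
Your proof is correct and matches the paper's intent: the paper states this corollary without proof precisely because it is the immediate specialisation you carry out, namely $j_1=\cdots=j_N=1$ forces $M=N$, $g_{m,1}=h_m$ makes the trick (\ref{eq:trick}) trivial, and the identification $B^{(k)}_{s,0}=A_{k,s}$ collapses Definition \ref{def:m} to equation (\ref{eq:m}). Your explicit verification of the coefficient matching and of the reduction of the integration domain to $(0,1)$ via $\supp \widecheck{h_m} \subset (0,1)$ simply fills in the details the paper leaves implicit.
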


\section{Construction of $\Lambda$}\label{sect:Lambda}
In this section, we construct the universal set $\Lambda$ from Theorem 1 and Theorem 2. The construction is the same for the case of simple poles and for the general case, so we fix $\eps>0$ and $M \in \mathbb{N}$ from Theorem 2 and construct the set $\Lambda = \Lambda(\eps, M)$.

\subsection{The construction}
Let $K=2M$ and $K_1 = \left[ {{K} \over {\eps}} \right]+1$. Place $K+1$ points of the form $\lambda_j = {{j} \over {K}}$, where $0 \leq j \leq K$. Now choose any $\delta \in \left(0, {{1} \over {K_1}} \right)$. Place one point $\lambda_{K+j}$ on each $[j, j+1]$ for any integer $1 \leq j \leq K_1$ by the rule:
\begin{equation}\label{eq:lambda}
\lambda_{K+j} = j+1-j\cdot \delta \text{ for any } 1 \leq j \leq K_1.
\end{equation}
We have $\lambda_{K+j} \in (j, j+1)$ since $\delta < 1/K_1$. Observe that there are exactly $K+1+K_1$ points chosen in $[0, K_1+1)$. \par 

Finally, consider a partition 
$$\R =\ldots\cup [-K_1-1,0) \cup [0, K_1+1) \cup [K_1+1, 2K_1+2) \cup \ldots$$
and apply this procedure to other intervals.\par
It is clear that
$$1< D(\Lambda) = \limsup_{a \to \infty} \sup_{R \in \R} {{\# \{x \in \Lambda \mid x \in [R, R+a]\}} \over {a}} = {{K+1+K_1} \over {K_1+1}} \leq 1+\eps.$$
Thus, we have constructed the set $\Lambda=\Lambda(\eps, M)$ of the desired density. Hereinafter, we fix $\Lambda = \{\lambda_n \mid n \in \Z\}$ such that $0 = \lambda_0$ and $\lambda_n < \lambda_{n+1}$ for any $n \in \Z$.

\section{Structure of $L_{\xi}$}\label{sect:Oper}
We use Theorem \ref{main:crit} to reduce the problem to the invertibility of the matrix operator $L_{\xi} \colon \ell^2(\Z) \to \ell^2(\Z)$, defined earlier in \eqref{def:L}. 
\subsection{The coefficients}
Choose $\xi \in [0,1)$. Following Section \ref{sect:Outline}, for a fixed $k \in \Z$ and $t \in (0,1)$ put 
$$b_k = t+\lambda_k - \xi  \in \Z.$$
Fix $k \in \Z$.  Let $[\lambda_k]=l$. If $\{\lambda_k\}\leq \xi$, then there exists $0 \leq  t_0 = \xi - \{\lambda_k\} \leq \xi$ such that $t_0+\{\lambda_k\} = \xi$. In this case we have $b_k = l = [\lambda_k]$. If $\{\lambda_k\}>\xi$, then there exists $\xi < t_1 = \xi+1-\{\lambda_k\}<1$ such that $t_1+\{\lambda_k\} = \xi+1$. So we have $b_k = l+1 = [\lambda_k]+1$. Hence, for $\zeta_k = \xi - \{\lambda_k\} \mod 1$ we define
\begin{equation} \label{eq:mlk}
a_l(k):= m_l(\zeta_k) = \begin{cases}
m_l(\xi - \{\lambda_k \}+1), & \xi - \{\lambda_k \}< 0,\\
m_l(\xi - \{\lambda_k \}), & \xi -\{ \lambda_k \}> 0.
\end{cases} 
\end{equation}
and in the $k$-th row of $L_{\xi}$, there are only $M$ possible non-zero coefficients $a_l(k)$ for $0 \leq l \leq M-1$. Now let us consider the inner structure of the operator.
\subsection{The blocks} 
Consider the equation $ t+ \lambda_n = \xi + b_n $ and assume $b_n = b_{n+1} \in \Z$ for some $n \in \Z$. In this case we have {\it a block}, i.e., two rows in $L_{\xi}$ of the first regime:
$$\begin{pmatrix} 
\ldots & 0 & a_0(n) & a_1(n) & \ldots  &a_{M}(n) &  0 & \ldots \\
\ldots & 0 &  a_0(n+1) & a_1(n+1) & \ldots  &a_{M}(n+1) &  0 & \ldots
\end{pmatrix}.$$
Hence, there exists $b \in \Z$ such that
\begin{equation}\label{eq:block}
t + \lambda_n = \xi + b \quad \text{ and } \quad t+\lambda_{n+1} = \xi +b.
\end{equation}
Now we have either $[\lambda_n] = [\lambda_{n+1}]$ and $\{\lambda_n\} < \{\lambda_{n+1}\} < \xi$ or $[\lambda_n] +1 =[\lambda_{n+1}]$ and $ \{\lambda_{n+1}\} < \xi < \{\lambda_n\} $. By induction we obtain
\begin{lemma}\label{lem:blocks} 
 For a fixed $\xi \in [0,1)$ the rows corresponding to  $\lambda_{j}, \ldots, \lambda_{j+k} \in \Lambda$ form a block in $L_{\xi}$ if and only if there exists $n\in \Z$ such that $\lambda_{j}, \ldots, \lambda_{j+k}  \in (\xi + n, \xi + n +1).$
\end{lemma}

\subsection{The segments}\label{sec:oper}
Now fix $\xi \in [0,1)$ and $\eps>0$ from Theorem 2. Set $M_1 = [2M/\eps]+1$. Let us take a closer look  at what happens in $[0,M_1+1)$. By Lemma \ref{lem:blocks}, the points from the arithmetic progression inside $[0,1]$ form two separate blocks: the first one with $\lambda_n \in [0,\xi]$ and the second one with $\lambda_n \in (\xi, 1]$:
\begin{equation}\label{twoblocks}
\newcommand*{\tempb}{\multicolumn{1}{|c}{0}}
\newcommand*{\templ}{\multicolumn{1}{c|}{0}}
\newcommand*{\tempv}{\multicolumn{1}{|c}{}}
\newcommand*{\templv}{\multicolumn{1}{c|}{\vdots}}
\newcommand*{\tstar}{\multicolumn{1}{|c}{\star}}
\newcommand*{\tvdots}{\multicolumn{1}{|c}{\vdots}}
\newcommand*{\tldots}{\multicolumn{1}{|c}{\ldots}}
\begin{pmatrix} 
\cline{2-5}
 \ldots & \tstar & \star & \ldots & \star & \tempb &   \ldots  \\
\ldots & \tstar & \star & \ldots & \star  & \tempb  & \ldots   \\
& \tvdots & \vdots&  & \vdots &   \tempv & \\
\ldots & \tstar & \star & \ldots & \star   & \tempb & \ldots   \\
\cline{2-6}
\ldots & \templ& \star & \star & \ldots & \star & \tldots  \\
\ldots & \templ& \star & \star & \ldots & \star  & \tldots  \\
& \templv& \vdots & \vdots&  & \vdots &   \tempv  \\
\ldots & \templ& \star & \star & \ldots & \star   &\tldots   \\
\cline{3-6}
\end{pmatrix}.
\end{equation}

On $[1,M_1+1)$ we have $\lambda_{2M+j} = j+1-j\delta$, so they form an $M$-diagonal subsegment of the second regime. Observe that $\{\lambda_{2M+j}\} = 1 - j\delta$ is strictly decreasing for any $1 \leq j \leq M_1$. Hence, there exists the index $j_0$ such that 
$$\{\lambda_{2M+j_0}\}  > \xi \quad \text{ and } \quad \{\lambda_{2M+j_0+1}\}  < \xi.$$
Now by Lemma \ref{lem:blocks} the rows $(j_0,j_0+1)$ form a $2\times M$-block inside the $M$-diagonal subsegment:
\begin{equation}\label{M-diag}
\newcommand*{\tstar}{\multicolumn{1}{|c}{\star}}
\newcommand*{\start}{\multicolumn{1}{c|}{\star}}
\begin{pmatrix}
 \star & \star & \ldots & \star  & 0 & 0 & \ldots  &0 & 0  \\
 0 & \star & \star & \ldots & \star  & 0 & \ldots  &0 & 0 \\
\vdots &   & \ddots &   \ddots & & \ddots& &   \vdots&  \vdots \\
\cline{4-7}
0  &  \ldots & 0 & \tstar &  \star  & \ldots& \start & 0 & 0\\
0  & \ldots & 0 &  \tstar & \star  & \ldots& \start & 0 & 0\\
\cline{4-7}
\vdots &   &  \vdots & & \ddots &   \ddots & & \ddots& \vdots \\
0 & \ldots & 0 &\ldots & 0 & \star &  \star & \ldots &\star 
\end{pmatrix}
\end{equation}
Finally, if $\lambda_{j_0}$ is the last point taken from $[0,M_1+1)$, then the $j_0$-th row forms a block with the next arithmetic progression inside $[M_1+1, M_1+2)$ (see construction of $\Lambda$, Section \ref{sect:Lambda}). \par
We define a {\it segment} to be the submatrix ranging from the block generated by points in $(\xi, 1)$ to the block generated by points in $[M_1+1, M_1+1+\xi]$. It consists of the starting block generated by $(\xi,1)$, followed by the $M$-diagonal subsegment together with the $2\times M$-block generated by $[1, M_1+1)$ and ends with the block generated by $[M_1+1,M_1+1+\xi]$. \par
Now the operator $L_{\xi}$ consists of segments of such a form, each of size $(2M+1+M_1)\times (M+M_1)$.

\section{Main lemma}\label{sect:Lemma}
Fix $N \in \mathbb{N}$. Let $\alpha > N$ be an arbitrary real number. Fix $\xi \in (0,1)$ such that $\xi > {{N-1}\over {\alpha}}$ and consider the matrix $B$ of the form
$$B = \begin{pmatrix}
m_0 (\xi) & m_1(\xi) & \ldots & m_{N-1}(\xi) \\
m_0 (\xi-{{1}\over {\alpha}}) & m_1(\xi-{{1}\over {\alpha}}) & \ldots & m_{N-1}(\xi-{{1}\over {\alpha}}) \\
\ldots & \ldots & \ldots & \ldots\\
m_0 (\xi-{{N-1}\over {\alpha}}) & m_1(\xi-{{N-1}\over {\alpha}}) & \ldots & m_{N-1}(\xi-{{N-1}\over {\alpha}}) 
\end{pmatrix} \in M_N(\CC),$$ 
where the functions $\{m_s(t)\}_{s=0}^{N-1}$ are defined in Equation \eqref{eq:m}. 
\begin{lemma} \label{lem:det} We have 
\begin{equation} \label{eq:det}
|\det B| = \prod_{k=1}^N |a_k| e^{2\pi \xi w_k} \cdot \prod_{1 \leq k<j \leq N} |e^{-2\pi {{w_k} \over {\alpha}}} - e^{-2\pi {{w_j} \over {\alpha}}}| \cdot \prod_{k <  l} |e^{2\pi w_k} - e^{2\pi w_l}|.
\end{equation}
\end{lemma}
This lemma can be easily deduced from the proof of Lemma 5.2 in \cite{BKL1}. However, for the sake of completeness of the article, we outline the proof.
\begin{proof}
Observe that $m_l\left(\xi - {{j} \over {\alpha}}\right) = \sum_{k=1}^N a_k A_{k, j} e^{2\pi \xi w_k} e^{-2\pi w_k {{j} \over {\alpha}}}$ for any $0 \leq j \leq N-1$. Now set 
$$A_k:=a_k e^{2\pi \xi w_k}, \quad y_k:=e^{-2\pi {{w_k} \over {\alpha}}}, \quad u_k = e^{2\pi w_k} \quad \text{ for any } 1 \leq k \leq N.$$
 It is now clear that 
$$B = \begin{pmatrix}
A_1 & A_2& \ldots & A_N\\
A_1 y_1 & A_2 y_2 & \ldots & A_N y_N\\
\ldots & \ldots & \ldots & \ldots\\
A_1 y_1^{N-1}& A_2 y_2^{N-1} & \ldots & A_N y_N^{N-1}\\
\end{pmatrix}  \cdot 
\begin{pmatrix}
1 & -\sum_{k\ne 1} u_k & \ldots & (-1)^{N-1} \prod_{k \ne 1} u_k \\
1 &  -\sum_{k\ne 2} u_k & \ldots &  (-1)^{N-1} \prod_{k \ne 2} u_k \\
\ldots & \ldots & \ldots & \ldots\\
1 &  -\sum_{k\ne N} u_k & \ldots & (-1)^{N-1} \prod_{k \ne N} u_k \\
\end{pmatrix},
$$
where the entries of the second matrix are the elementary symmetric polynomials in the variables $\{u_k\}_{k=1}^N$. Now denote the first matrix by $X$ and the second matrix by $Y$. It is clear that
$$\det Y = \pm \prod_{k < l} (u_k-u_l),$$
while 
$$\det X = \prod_{k=1}^N A_k \cdot \det \begin{pmatrix}
1 & 1& \ldots &1\\
y_1 & y_2 & \ldots & y_N\\
\ldots & \ldots & \ldots & \ldots\\
y_1^{N-1}& y_2^{N-1} & \ldots & y_N^{N-1}\\
\end{pmatrix} = \prod_{k=1}^N A_k \cdot  \prod_{i<j} (y_j- y_i).$$
Combining all the equalities above, we obtain the claim.
\end{proof}

\section{Proof of Theorem 1}\label{sect:Th1}
Now we are ready to prove our main result in the case of simple poles. Fix $N \in \mathbb{N}$ and $\eps > 0$ from Theorem 1 and fix $N_1 = [2N/\eps]+1$. Let us divide the proof into steps. \par
{\bf Step 1}. By Theorem \ref{main:crit}, it is sufficient to prove that
$$ \sum_{m \in \Z} \int_0^{1} \left| \sum_{l=0}^{N-1} G(t + \lambda_m + l) m_l(t)  \right|^2 dt \ge \const \cdot \|G\|_2^2 \quad \text{ for any } G \in L^2(\R).$$
 Fix $\xi \in [0,1)$ and construct the operator $L_{\xi}$ from Section \ref{sect:Oper}. Observe that
\begin{equation}\label{eq:mN}
m_{N-1}(t) = (-1)^{N-1} \cdot \sum_{k=1}^N a_k e^{2\pi w_k t} \prod_{j \ne k} e^{2\pi w_j} = (-1)^{N-1} e^{2\pi \cdot \sum_j w_j} \cdot \sum_{k=1}^N a_k e^{2\pi (t-1) w_k}.
\end{equation}
Now by Definition \ref{def:K} we have 
\begin{equation} 
m_{N-1}(t) \ne 0 \text{ for any } 0\leq t<1.
\end{equation}
 Consider the segments defined in Subsection \ref{sec:oper}. This segments provide a partition of the operator, so one can number them while preserving their order from top to bottom. Consider the finite-dimensional matrices $A_m$ corresponding to the $m$-th segment. Each $A_m$ has $2N+1+N_1$ rows and $N+N_1$ columns.\par

{\bf Step 2}. For any  $k \in \Z$ consider the arithmetic progression $\{\lambda_j = k(N_1+1)+{{j} \over {2N}} \}_{j=0}^{2N} \subset [kN_1+k, kN_1+k+1)$. This interval generates two blocks: 
\begin{itemize}
\item {\it the first type block}, generated by $\lambda_j \in [kN_1+k, kN_1+k+\xi)$,
\item {\it the second type block}, generated by $\lambda_j \in [kN_1+k+\xi, kN_1+k+1)$. 
\end{itemize}
One of these blocks consists of no fewer than $N+1$ rows. The type of this block is the same for each $k$ by construction of $\Lambda$. \par
 One can remove rows from $L_{\xi}$ since we only want to check the bottom inequality $\|L_{\xi}(x)\| \ge c \cdot \|x\|$ and removing rows can only decrease the left-hand side norm. If we have at least $N+1$ rows in the blocks {\it of the first type}, then delete all rows from the second-type blocks except the last one (which corresponds to the point $ kN_1+k+1$). Attach this row to the $N$-diagonal subsegment of the previous segment. Also, keep only the first $N+1$ rows from each block of the first type. \par
If we have at least $N+1$ rows in the blocks {\it of the second type}, then delete all the rows from the first type blocks except the first row. Attach this row to the $N$-diagonal subsegment of $A_{k-1}$. Also, keep only the last $N+1$ rows from each block of the second type. Hence, any block in $L_{\xi}$ is now an $(N+1)\times N$-matrix. \par
Finally, inside the $N$-diagonal subsegment, generated by $\big[kN_1+k+1, (k+1)N_1+k+1\big)$, we always have a $2\times N$-block, as shown in Subsection \ref{sec:oper}. Delete the row from this block which has the smaller last coefficient $|a_{N-1}(\cdot)|$. However, the number of rows inside $N$-diagonal subsegment remains unchanged, since we previously reduced one of the blocks inside $A_k$ to the one row.\par
Now each segment $A_m$ is an $(N+N_1+1)\times (N+N_1)$ matrix consisting of the block and the $N$-diagonal subsegment without inner blocks:
$$
\newcommand*{\tempb}{\multicolumn{1}{|c}{0}}
\newcommand*{\tempv}{\multicolumn{1}{|c}{\vdots}}
A_m = \begin{pmatrix} 
\star & \star & \ldots & \star & \tempb &  0  & 0&\ldots  &0\\
\star & \star & \ldots & \star  & \tempb  & 0 & 0 & \ldots  & 0\\
\vdots & \vdots&  & \vdots &   \tempv& \vdots  &  \vdots  & & \vdots \\
\star & \star & \ldots & \star   & \tempb & 0 &  0 & \ldots  &0 \\
\cline{1-4}
0 & \star & \star & \ldots & \star  & 0 & 0 & \ldots  &0 \\
0 & 0 & \star & \star & \ldots & \star  & 0 & \ldots  &0\\
\vdots & &   & \ddots &   \ddots & & \ddots& &\vdots  \\
0 & \ldots  & 0 & 0 & \star &  \star  & \ldots& \star & 0\\
0 & \ldots  & 0 & 0 & 0 & \star & \star  & \ldots& \star\\
\end{pmatrix}.
$$
Note that now every block of $L_{\xi}$ has {\it the same type}.

{\bf Step 3}. Fix $m\in \Z$ and consider the $m$-th segment with a matrix $A_m$. Define a function $A$ that maps $(\zeta_0, \ldots , \zeta_{N+N_1} )$ to a matrix
$$ \begin{pmatrix}
m_0(\zeta_0) & m_1(\zeta_0) & \ldots & m_{N-1}(\zeta_0) & 0 & \ldots & 0 \\
m_0(\zeta_1) & m_1(\zeta_1) & \ldots & m_{N-1}(\zeta_1) & 0 & \ldots & 0 \\
&  & \ldots & & \ldots &  &\\
m_0(\zeta_N) & m_1(\zeta_N) & \ldots & m_{N-1}(\zeta_N) & 0 & \ldots & 0 \\
0& m_0(\zeta_{N+1}) & \ldots & m_{N-2}(\zeta_{N+1}) &  m_{N-1}(\zeta_{N+1}) & \ldots \\
&  & \ldots & & \ldots &  &\\
0&\ldots & m_0(\zeta_{N+N_1}) & \ldots & \ldots &  m_{N-2}(\zeta_{N+N_1}) & m_{N-1}(\zeta_{N+N_1})  \\
\end{pmatrix}$$
where $m_l(\zeta)$ is defined in Equation (\ref{eq:m}). Denote the block of this matrix by $\Delta(\zeta_0, \ldots , \zeta_N)$. Clearly, for an $m$-th segment $A_m$, we have 
$$A_m = A(\{\lambda_{j_0}\}, \ldots , \{\lambda_{j_{N+N_1}}\} ), $$
where $\{j_0, \ldots, j_{N+N_1} \}$ is the set of row indices of the segment. 

{\bf Step 4}. By erasing one chosen row from each block, we may assume that each block has exactly $N$ rows. If all the blocks are  {\it of the first type}, then remove the last row of each block. If all the blocks are  {\it of the second type}, then remove the first row. Now all the blocks are $N\times N$ matrices and it is easy to see that
\begin{equation}
\det A(\zeta_0, \ldots , \zeta_{N+N_1})  = \det \Delta(\zeta_0 , \ldots , \zeta_N) \cdot \prod_{j=N+1}^{N+N_1} m_{N-1}(\zeta_j).
\label{matrix:det}
\end{equation}
Here we formally write $\Delta(\zeta_0 , \ldots , \zeta_N)$ as a function of $N+1$ variables, but we have already erased one row, so it is actually of $N$ variables.

{\bf Step 5}. In this step, we prove that $|a_{N-1}(j)| \ge \const > 0$ inside any $N$-diagonal subsegment of any segment $A_m$. The only possibility for $|a_{N-1}(j)|$ to be arbitrarily small is when the corresponding function $m_{N-1}(\zeta_j)$ has its argument arbitrarily close to 1 by \eqref{eq:mN}. If there exists a row $j$ inside the $N$-diagonal subsegment of $A_m$ such that $|a_{N-1}(j)|$ is sufficiently small, then $\xi$ is sufficiently close to $\{\lambda_j\}$ from the left by \eqref{eq:mlk}. By construction of $\Lambda$ we have
$$\{\lambda_j \}= 1-\delta j \quad  \text{ and } \quad \{\lambda_{j+1} \} = 1-\delta(j+1).$$
By Lemma \ref{lem:blocks} the rows $(j, j+1)$ form a $2\times N$-block. But in Step 2, we have already removed the row from such a block that has the smaller $|a_{N-1}(j)|$. The remaining $|a_{N-1}(j+1)|$ and $|a_{N-1}(j-1)|$ from neighboring rows cannot be arbitrarily small since $|\lambda_{j\pm1} - \lambda_j| =1 -\delta$. \par
If such a row is the last in the $N$-diagonal subsection, then it forms a block with the starting block of $A_{m+1}$ and is also removed in Step 2.\par
Hence, one can assume that there exists $C_1>0$ independent of $A_m$ such that in (\ref{matrix:det}) we have 
\begin{equation} \label{eq:det1}
\left| \prod_{j=N+1}^{N+N_1} m_{N-1}(\zeta_j) \right| \ge C_1^{N_1}
\end{equation}
for any segment of the operator. Now we only need to examine $\det \Delta (\zeta_0 \ldots , \zeta_N)$ for any arguments $\zeta_j$ corresponding to each segment.

{\bf Step 6}. Denote the block of the $m$-th segment by $\Delta_m$. If it is {\it of the first type}, for the $N$-tuple of arguments $(\zeta_0, \ldots , \zeta_{N-1})$ of the functions $\{m_s\}_{s=0}^{N-1}$ corresponding to $\Delta_m$ we have $\zeta_k = \xi - {{k} \over {2N}}$ for any $k$, since we erased other rows on Step 2.  Now $\Delta_m$ has the form
$$\begin{pmatrix} 
\sum\limits_{k=1}^N a_k e^{2\pi \xi  w_k} & \ldots  &  (-1)^{N-1} \sum\limits_{k=1}^N a_k e^{2\pi \xi w_k}  \cdot \prod\limits_{l \ne k} e^{2\pi w_l}\\
 \ldots &    &  \ldots\\
\sum\limits_{k=1}^N a_k e^{2\pi \left(\xi-{{N-1} \over {2N}} \right)w_k}  & \ldots  & (-1)^{N-1} \sum\limits_{k=1}^N a_k e^{2\pi \xi w_k}  e^{-2\pi w_k{{N-1} \over {2N}}}  \cdot \prod\limits_{l \ne k} e^{2\pi w_l} \\
\end{pmatrix}$$
If $\Delta_m$ is {\it of the second type}, then $\zeta_k = \xi - {{j} \over {2N}} +1$ for $N+1\leq j \leq 2N$. Hence, $\Delta_m$ has the form
$$\begin{pmatrix} 
\sum\limits_{k=1}^N a_k e^{2\pi  w_k \left( \xi + {{N-1} \over {2N}} \right)} &  \ldots  & (-1)^{N-1} \sum\limits_{k=1}^N a_k e^{2\pi \xi w_k}  e^{2\pi w_k {{N-1} \over {2N}}}  \cdot \prod\limits_{l \ne k} e^{2\pi w_l} \\
 \ldots &    &  \ldots\\
\sum\limits_{k=1}^N a_k e^{2\pi \xi w_k}  & \ldots  & (-1)^{N-1} \sum\limits_{k=1}^N a_k e^{2\pi \xi w_k}  \cdot \prod\limits_{l \ne k} e^{2\pi w_l} \\
\end{pmatrix}$$

{\bf Step 7}. Set $\alpha = 2N$ in Lemma \ref{lem:det}. If the blocks are {\it of the second type}, we also use the change of variables $\xi + {{N-1} \over {2N}} \mapsto \xi$. For simplicity, assume that all the blocks are {\it of the first type}. Note that on the right-hand side of Equation (\ref{eq:det}) only the first product depends on $\xi$. By Lemma \ref{lem:det} we have
$$\det \Delta_m = \pm \prod_{k=1}^N e^{2\pi \xi w_k} \cdot C_2,$$
where the constant $C_2$ depends only on $\{w_k\}_{k=1}^N$, $\{a_k\}_{k=1}^N$ and $N$. Hence,
\begin{equation}\label{eq:det2}
|\det \Delta_m | \ge \prod_{k=1}^N \min_{t \in (0,2)} e^{2\pi t w_k}  \cdot |C_2| =: C_3.
\end{equation}
Combining (\ref{eq:det1}) and (\ref{eq:det2}) we achieve the inequality
$$|\det A_m | = |\det A(\{\lambda_{j_0} \}, \ldots , \{\lambda_{j_{N+N_1}}\})| \ge c > 0$$
for any $m$-th segment of the operator and for some $c>0$, which depends only on $g$ and $N$. \par

{\bf Step 8}.  For any $m \in \Z$ we have $|\det A_m| \ge c >0$ and so there exists $D_m \in M_{N+N_1} (\CC)$ such that $A_m^{-1} = D_m$. It has the form $D_m = {{1} \over {\det A_m}} J_m^T$, where $J_m$ is the adjugate matrix of $A_m$. The collection $\{\|J_m\|\}_{m \in \Z}$ is uniformly bounded by some constant $C_4>0$, since $L_{\xi}$ is bounded. We have
$$\|D_m\| \leq {{1} \over {\sup |\det A_m| }} \cdot C_4 \leq  {{1} \over {c}} \cdot C_4 \text{ for any } m \in \Z.$$

%Fix $x \in \ell^2$ and define $p_m(x)$ to be a projection on the coordinates which corresponds to indices of matrix $D_m$. Construct the operator $D$ which consists of the blocks $D_m$. We have 
%$$\|D (x) \|^2 \leq  \sum_m \|D_m(p_m(x))\|^2 \leq \sum_m \left({{C_4} \over {c}} \right)^2 \|p_m(x)\|^2 =  \left({{C_4} \over {c}} \right)^2 \|x\|^2,$$
%so the operator $D$ is bounded. 

Hence, $L_{\xi}$ has a bounded inverse and so 
$$\|L_{\xi} (x) \| \ge \const \cdot \|x\|_2 \text{ for any } x\in \ell^2.$$

\section{Proof of Theorem 2}\label{sect:Th2}

Fix $\eps>0$ and $M \in \mathbb{N}$, and let $g$ be a function from $\K_1(M)$. Put $M_1 = \left[{{2M} \over {\eps}} \right]+1$ and consider the set $\Lambda = \Lambda(\eps, M)$ defined in Section \ref{sect:Lambda}. Construct an operator $L_{\xi} (g)$ as in Section \ref{sect:Oper} using the functions $m_s(t)$ from Definition \ref{def:m}. For the proof of Theorem 2 we need the following statement.
\begin{lemma}\label{lem:mM}
For any $0\leq t<1$ we have $m_{M-1} (t) \ne 0$.
\end{lemma}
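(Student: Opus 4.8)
The plan is to identify $m_{M-1}$ with the leading symbol of the integrand in (\ref{eq:Pg}) and to reduce it, after the inverse Fourier substitution, to the very function whose non-vanishing defines $\K_1(M)$ in (\ref{eq:defK2}). First I would note that, by Definition \ref{def:m} and (\ref{eq:Pg}), $m_{M-1}(t)$ is what the substitution $h_m^{(l)}(t-iw_k)\mapsto (2\pi i)^l t^l e^{2\pi w_k t}$ produces from the coefficient of the top power $e^{2\pi i(M-1)t}$ in $\sum_{k=1}^N a_k P_k(t) g_{m,j_k}(t-iw_k)$. Since $P_k$ is a polynomial in $e^{2\pi i t}$ of degree $M-j_k$ by (\ref{eq:Pk}) and $g_{m,j_k}(t-iw_k)$ is one of degree $j_k-1$ by Remark \ref{rem:pol}, the power $M-1$ is reached only by the product of the two leading coefficients, so no lower-order terms interfere. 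This is the direct analogue of the simple-pole computation (\ref{eq:mN}); the genuinely new feature is that now every derivative $h_m^{(l)}$ with $0\le l\le j_k-1$ feeds into that leading coefficient.

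The leading coefficient of $P_k$ is read off from (\ref{eq:Pk}) as the nonzero constant $(-1)^{M-j_k}e^{2\pi\sum_{l\ne k}j_l w_l}$. The real work is the substituted leading coefficient of $g_{m,j_k}(t-iw_k)$, and for this I would avoid a term-by-term expansion of (\ref{eq:trick}) and use instead the compact form $g_{m,j_k}(z)=\frac{(-1)^{j_k-1}}{(j_k-1)!}(1-u)^{j_k}\,\partial_z^{j_k-1}\frac{h_m(z)}{1-u}$, with $u=e^{2\pi i z}$, which follows immediately from the definitions of $g_{m,j_k}$ and of $h_m$.

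The heart of the argument is a collapse identity, which I expect to be the main obstacle. The substitution sends $h_m\mapsto e^{2\pi w_k t}$ and carries $\partial_z$ to the operator $2\pi i(t+u\partial_u)$ (with $u\partial_u$ recording the power of $u$); hence, up to the nonzero scalar $\frac{(-1)^{j_k-1}(2\pi i)^{j_k-1}}{(j_k-1)!}e^{2\pi w_k t}$, the substituted $g_{m,j_k}(t-iw_k)$ becomes $(1-u)^{j_k}(t+u\partial_u)^{j_k-1}\frac{1}{1-u}$. Writing $\frac{1}{1-u}=\sum_{p\ge 0}u^p$ turns $(t+u\partial_u)^{j_k-1}$ into $\sum_p (t+p)^{j_k-1}u^p$, and the coefficient of $u^{j_k-1}$ equals $\sum_{i=0}^{j_k-1}\binom{j_k}{i}(-1)^i(t+j_k-1-i)^{j_k-1}$. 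Because the complete alternating sum over $0\le i\le j_k$ is a $j_k$-th finite difference of a polynomial of degree $j_k-1$, it vanishes, so the partial sum equals minus its missing $i=j_k$ term, namely $(-1)^{j_k-1}(t-1)^{j_k-1}$. The decisive point is the shift of the polynomial argument from $t$ to $t-1$: the substituted leading coefficient of $g_{m,j_k}(t-iw_k)$ is a nonzero constant times $\frac{(2\pi i)^{j_k-1}}{(j_k-1)!}(t-1)^{j_k-1}e^{2\pi w_k t}$.

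Finally I would multiply the two leading coefficients, sum against $a_k$ over $k$, and absorb the exponential constants, which presents $m_{M-1}(t)$ as a nonzero factor times $\sum_{k=1}^N a_k \frac{(2\pi i)^{j_k-1}}{(j_k-1)!}(t-1)^{j_k-1}e^{2\pi w_k(t-1)}$. This is, up to a nonvanishing multiplier and the substitution $t\mapsto t-1$, exactly the function in the defining condition (\ref{eq:defK2}) of $\K_1(M)$ (equivalently $\widehat{g}$ on the positive semi-axis). For $t\in(0,1)$ we have $t-1\in(-1,0)$, so this value is nonzero because $g\in\K_1(M)$; hence $m_{M-1}(t)\ne 0$ on $(0,1)$. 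Apart from the collapse identity of the third paragraph, every step is routine bookkeeping of leading coefficients, parallel to (\ref{eq:mN}).
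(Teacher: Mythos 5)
You prove the lemma correctly, and by the same overall strategy as the paper: $m_{M-1}$ is the substitution image of the coefficient of the top power $e^{2\pi i (M-1)t}$ in $\sum_k a_k P_k(t)\,g_{m,j_k}(t-iw_k)$; since $P_k$ has exact degree $M-j_k$ in $e^{2\pi i t}$ with leading coefficient $A_{k,M-j_k}=(-1)^{M-j_k}e^{2\pi \sum_{l\ne k} j_l w_l}$ and the $g$-factor has degree at most $j_k-1$ by Remark \ref{rem:pol}, only top-times-top contributes --- exactly the paper's identity $B^{(k)}_{M-1,l}=a_{l,j_k-1}A_{k,M-j_k}$ --- and the endgame is the same reduction to condition (\ref{eq:defK2}) at the point $t-1\in(-1,0)$. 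Where you genuinely diverge is the middle step, and there your route is arguably tighter than the paper's. The paper asserts the closed formula $a_{l,j_k-1}=e^{2\pi w_k(j_k-1)}\frac{(-1)^{j_k-1-l}(2\pi i)^{j_k-1-l}}{(j_k-1)!}\binom{j_k-1}{l}$ without derivation (it must be extracted from the Leibniz-expanded identity (\ref{eq:trick})) and then resums the binomial sum $\sum_l \binom{j_k-1}{l}(-1)^{j_k-1-l}t^l=(t-1)^{j_k-1}$. You never compute the individual $a_{l,d}$: you start from the unexpanded form $g_{m,j_k}(z)=\frac{(-1)^{j_k-1}}{(j_k-1)!}(1-u)^{j_k}\partial_z^{j_k-1}\bigl(h_m(z)/(1-u)\bigr)$ with $u=e^{2\pi i z}$ (correct --- it is precisely the identity the paper Leibniz-expands at the start of its ``trick'' subsection), check on monomials $h_m^{(l)}u^p$ that the substitution $h_m^{(l)}(t-iw_k)\mapsto (2\pi i)^l t^l e^{2\pi w_k t}$ intertwines $\partial_z$ with $2\pi i(t+u\partial_u)$, and then kill the truncated alternating sum $\sum_{i=0}^{j_k-1}\binom{j_k}{i}(-1)^i(t+j_k-1-i)^{j_k-1}$ by the vanishing of the $j_k$-th finite difference of a polynomial of degree $j_k-1$, leaving $(-1)^{j_k-1}(t-1)^{j_k-1}$. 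So your collapse identity proves what the paper merely states, at the modest cost of justifying the substitution calculus (the monomial check you indicate does suffice, and the geometric-series manipulation is a legitimate coefficientwise computation, since the expression is a genuine polynomial of degree at most $j_k-1$ in $u$). One caveat, which you inherit from the paper rather than introduce: both your final paragraph and the paper's last display absorb the $k$-dependent sign $(-1)^{M-j_k}$ (from the leading coefficient of $P_k$, combined with the $(-1)^{j_k-1}$ from the collapse) into a single ``nonzero factor''; this is constant in $k$ only when all $j_k$ have the same parity. For mixed parities the computation actually yields, up to a global nonzero constant, $\sum_k a_k e^{2\pi w_k(t-1)}\frac{(-2\pi i)^{j_k-1}}{(j_k-1)!}(t-1)^{j_k-1}$, i.e.\ condition (\ref{eq:defK2}) with $2\pi i$ replaced by $-2\pi i$, which is the version that matches $\widehat{g}\ne 0$ on $(0,+\infty)$; the lemma therefore stands under the intended reading of $\K_1(M)$, and on this point your argument is faithful to the paper's own proof.
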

\begin{proof}
According to Definition \ref{def:m} we have to compute $B_{M-1, l}^{(k)} $ explicitly. Note that $\sum_{l\ne k} j_l = M-j_k$. Now for $s=M-1$ we have $B_{M-1, l}^{(k)} = a_{l,j_k-1} A_{k, M-j_k}$, since this is just a coefficient of $e^{2\pi i (M-1)t}$ in Equation (\ref{eq:Pg}). Clearly we obtain
$$A_{k, M-j_k} = \prod_{l \ne k} (-1)^{j_l} \binom{j_l}{j_l}e^{2\pi w_l j_l} = (-1)^{M-j_k} e^{2\pi \cdot \sum_{l\ne k} w_l j_l}, $$ 
since this is a coefficient of $e^{2\pi i (M-j_k)t}$ in Equation (\ref{eq:Pk}).  It remains to note that 
$$a_{l,j_k-1} =e^{2\pi w_k(j_k-1)}  {{(-1)^{j_k-1-l}  (2\pi i)^{j_k-1-l}} \over {(j_k-1)!}} \binom{j_k-1}{l}.$$
Now we obtain
$$m_{M-1}(t) = \sum_{k=1}^N a_k A_{k, M-j_k} e^{2\pi w_k t} \sum_{l=0}^{j_{k-1}} a_{l,j_k-1} (2\pi i)^l t^l =$$
$$= (-1)^{M-1} e^{2\pi \sum_{b=1}^N w_b j_b} \cdot \sum_{k=1}^N a_k e^{2\pi w_k (t-1)}  {{(2\pi i)^{j_k-1}} \over {(j_k-1)!}}  (t-1)^{j_k-1}  \ne 0$$
 for any $0\leq t<1$ because of Definition \ref{def:K2}.
\end{proof}
 Note that in the proof of Lemma \ref{lem:mM} we obtained a direct formula for $m_{M-1}$, which coincides with formula (\ref{eq:mN}) in the case of $j_1 = \ldots = j_N = 1$.\par
The structure of the operator $L_{\xi}(g)$ is the same as in the case of simple poles. Hence, by direct application of Step 2, Step 3 and Step 4 of Section \ref{sect:Th1} we may assume that $L_{\xi}(g)$ consists of  segments $A_m(g)$ of the form 
$$
\newcommand*{\tempb}{\multicolumn{1}{|c}{0}}
\newcommand*{\tempv}{\multicolumn{1}{|c}{\vdots}}
A_m(g) = \begin{pmatrix} 
\star & \star & \ldots & \star & \tempb &  0  & 0&\ldots  &0\\
\star & \star & \ldots & \star  & \tempb  & 0 & 0 & \ldots  & 0\\
\vdots & \vdots&  & \vdots &   \tempv& \vdots  &  \vdots  & & \vdots \\
\star & \star & \ldots & \star   & \tempb & 0 &  0 & \ldots  &0 \\
\cline{1-4}
0 & \star & \star & \ldots & \star  & 0 & 0 & \ldots  &0 \\
0 & 0 & \star & \star & \ldots & \star  & 0 & \ldots  &0\\
\vdots & &   & \ddots &   \ddots & & \ddots& &\vdots  \\
0 & \ldots  & 0 & 0 & \star &  \star  & \ldots& \star & 0\\
0 & \ldots  & 0 & 0 & 0 & \star & \star  & \ldots& \star\\
\end{pmatrix}.$$
Each segment is of size $(M+M_1) \times (M+M_1)$. Combining Step 5 of Section \ref{sect:Th1} and Lemma \ref{lem:mM} we obtain
$$\left| \prod_{j=M+1}^{M+M_1} a_{M-1}(j) \right| \ge C_1^{M_1}$$
for some constant $C_1>0$, which does not depend on the segment. It remains to show that the blocks of $L_{\xi}(g)$ are uniformly invertible.\par
Observe that 
$${{a_k } \over {(t-iw_k)^{j_k}}} = {{a_k} \over {(j_k-1)! i^{j_k-1}}} \cdot {{\partial^{j_k-1} } \over {\partial w^{j_k-1}}} \left({{1}\over {t-iw_k}} \right).$$
 Fix $\eps_1>0$ and construct a function 
$$g_{\eps_1} (t) = \sum_{k=1}^N {{a_k} \over {(j_k-1)! i^{j_k-1} \eps_1^{j_k-1}}} \sum_{l=0}^{j_k-1} (-1)^{j_k -l-1} \binom{j_k-1}{l} {{1} \over {t-i(w_k + l\eps_1)}}.$$
If $\eps_1$ is small enough, then $g_{\eps_1}(t)$ is a rational function with simple poles $i w_{k,l} = i(w_k+l\eps_1)$, where $0\leq l \leq j_k-1$. The number of poles is precisely $M$, so one can construct the operator $L_{\xi}(g_{\eps_1})$ for that function. \par
Denote by $\Delta_m(g)$ (respectively, $\Delta_m(g_{\eps_1})$) the block of the $m$-th segment of $L_{\xi}(g)$ (respectively, $L_{\xi}(g_{\eps_1})$). Applying Steps 2, 3, 4, and 5 from Section \ref{sect:Th1} to $L_{\xi}(g_{\eps_1})$, we may assume that $\Delta_m(g_{\eps_1})$ is a square matrix with precisely $M+M_1$ rows and columns. \par

We want to apply Lemma \ref{lem:det} to $\Delta_m(g_{\eps_1})$ as $\eps_1 \to 0$. Consider two poles $w_{k,l}$ and $w_{k,n}$ of $g_{\eps_1}$. We have
$$e^{-2\pi\frac{ w_{k,n}}{\alpha}} - e^{-2\pi\frac{ w_{k,l}}{\alpha}} \sim e^{-2\pi w_k/\alpha} \cdot \left( \frac{2\pi}{\alpha} \right) \cdot (l - n)\eps_1.$$
So the second product in Lemma \ref{lem:det} gives us $\eps_1^{j_k(j_k-1)/2}$. The third product also gives us $\eps_1^{j_k(j_k-1)/2}$, while the coefficients in the denominator give us ${{1} \over {\eps_1^{j_k(j_k-1)}}}$. It remains to note that for the poles $w_{k,l}$ and $w_{i,j}$ for $k \ne i$ their differences tend to a constant as $\eps_1 \to 0$. Hence, combining Steps 6 and 7 of Section \ref{sect:Th1}, we obtain
$$|\det \Delta_m (g_{\eps_1})| \ge  C_2 \quad \text{for any } m \in \Z$$
for sufficiently small $\eps_1$ and some constant $C_2>0$ depending only on the original function $g$. Now we only need to show that
$$\lim_{\eps_1 \to 0} |\det \Delta_m(g_{\eps_1})|  = |\det \Delta_m(g)| \quad \text{pointwise for any } m \in \Z.$$
Indeed, $\det \Delta_m(g)$ is a polynomial function of the variables $m_j$ in the sense of Definition \ref{def:m}, while $\det \Delta_m(g_{\eps_1})$ is {\it the same function} of the variables $m_j$, which are the functions {\it of the same arguments} in the sense of Equation (\ref{eq:m}). By the definition of $m_s$, it is sufficient to prove the following lemma.
\begin{lemma} For the function $g_{\eps_1}$ define the functions $M_s^{\eps_1}(t)$ by \eqref{eq:Mst}. Then for any $s$ we have $M_s^{\eps_1}(t) \to M_s(t)$ for $t \in [0,1]$.
\end{lemma}
\begin{proof} For $P(t) = \prod_{k=1}^M (1 - e^{2\pi i(t-iw_k)})^{j_k}$ we have
$$P(t) \sum_{n \in \mathbb{Z}} c_{m,n} g(t-n) = \sum_{s=0}^{M-1} M_s(t) e^{2\pi i s t}$$
by definition of $M_s(t)$. Also, for $P_{\eps_1}(t) = \prod_{k=1}^M \prod_{l=0}^{j_k-1} (1 - e^{2\pi i(t - iw_{k,l})})$ we have 
$$P_{\eps_1}(t) \sum_{n \in \mathbb{Z}} c_{m,n} g_{\eps_1}(t-n) = \sum_{s=0}^{M-1} M_s^{\eps_1}(t) e^{2\pi i s t}$$
by definition of $M_s^{\eps_1}(t)$. Since $w_{k,l} \to w_k$ as $\eps_1 \to 0$, we obtain that $P_{\eps_1} $ converges to $P$ uniformly on any compact set. Also, $g_{\eps_1}$ converges to $g$ as $\eps_1 \to 0$ pointwise (actually, uniformly away from the poles). Hence, 
$$\sum_{s=0}^{M-1} M_s^{\eps_1}(t) e^{2\pi i s t} \xrightarrow{\eps_1 \to 0} \sum_{s=0}^{M-1} M_s(t) e^{2\pi i s t} \quad \text{pointwise}.$$
The functions $\{e^{2\pi i st}\}_{s=0}^{M-1}$ are linearly independent, so we have $M_s^{\eps_1} \to M_s$ as $\eps_1 \to 0$ pointwise.
\end{proof}
Now for any $m \in \Z$ one can obtain
$$| \det A_m(g) | =  |\det \Delta_m (g)| \cdot \prod_{k=M+1}^{M+M_1+1} |a_{M-1} (k)| \ge C_1^{M_1} C_2 > 0,$$
Hence, $L_{\xi} (g)$ has a bounded inverse for almost all $\xi \in [0,1)$ by Step 8 of Section \ref{sect:Th1}, which completes the proof.

\end{document}